\newtheorem{theorem}{Theorem}
\definecolor{lightblue}{rgb}{0.22,0.45,0.70}
\numberwithin{equation}{section}
\numberwithin{figure}{section}
\numberwithin{table}{section}
\newcommand\cero{\boldsymbol{0}}
\newcommand\bC{\mathbf{C}}
\newcommand\bE{\mathbf{E}}
\newcommand\bH{\mathbf{H}}
\newcommand\bI{\mathbf{I}}
\newcommand\bL{\mathbf{L}}
\newcommand\bT{\mathbf{T}}
\newcommand\bV{\mathbf{V}}
\newcommand\rQ{\mathrm{Q}}
\newcommand\ff{\boldsymbol{f}}
\newcommand\bn{\boldsymbol{n}}
\newcommand\bu{\boldsymbol{u}}
\newcommand\bv{\boldsymbol{v}}
\newcommand\bnabla{\boldsymbol{\nabla}} 
\newcommand\bDelta{\boldsymbol{\Delta}} 
\def\rp{\mathrm{p}}
\def\rS{\mathrm{Sob}}
\def\qan{{\quad\hbox{and}\quad}}
\newcommand{\rZ}{\mathrm{Z}}
\newcommand\bW{\mathbf{W}}
\newcommand{\bxi}{\mbox{\boldmath{$\xi$}}}
\newcommand{\bchi}{\mbox{\boldmath{$\chi$}}}
\newtheorem{lemma}{Lemma}
\numberwithin{lemma}{section}
\newcommand\cS{\mathcal{S}}
\newcommand\cT{\mathcal{T}}
\newcommand\vdiv{\operatorname{div}}
\title{Analysis of a finite element method for the Stokes--Poisson--Boltzmann equations}
\author{Abeer F. AlSohaim\thanks{School of Mathematics, Monash University, 9 Rainforest Walk, 3800 VIC, Australia; and Department of Mathematics and Statistics, College of Science, IMSIU (Imam Mohammad Ibn Saud Islamic University), Riyadh, Saudi Arabia.   \protect\url{abeer.alsohaim@monash.edu}
} \and 
  Ricardo Ruiz-Baier\thanks{School of Mathematics, Monash University, 9 Rainforest Walk, 3800 VIC, Australia.
\protect\url{ricardo.ruizbaier@monash.edu}, \protect\url{http://orcid.org/0000-0003-3144-5822}}
 \and 
Segundo Villa-Fuentes\thanks{School of Mathematics, Monash University, 9 Rainforest Walk, 3800 VIC, Australia.  \protect\url{segundo.villafuentes@monash.edu},
\protect\url{http://orcid.org/0000-0002-0377-6555}}
}
\date{\today}
\begin{document}

\maketitle

\begin{abstract} We define a finite element method for the coupling of Stokes and nonlinear Poisson--Boltzmann equations. {The novelty in the formulation is that the coupling from the electric potential to the drag in the momentum balance is rewritten as a weighted advection term}. Using Banach's  {contraction}  principle, the Babu\v{s}ka--Brezzi theory, and the Minty--Browder theorem, we show that the governing equations have a unique weak solution. We also show that the discrete problem is well-posed,  establish C\'ea estimates, and derive convergence rates. We exemplify the properties of the proposed scheme via some numerical experiments showcasing convergence and applicability in the study of electro-osmotic flows in micro-channels.  \\

\noindent\emph{2020 MSC codes:} 65N30 (primary);  35J60, 76D07 (secondary).\\

\noindent\emph{Keywords:} Stokes--Poisson--Boltzmann equations; Fixed-point analysis; Finite element discretisation; Error estimates.
\end{abstract}

\section{Introduction}
\paragraph{Scope.} Electrically charged flows are of utmost importance in 
chemistry applications that concern{,} for example, the design of nanopores for biomedical devices and the modelling of water purification systems. One of the simplest processes to transport fluid without mechanical stimulation is electro-osmosis. The mobility of the fluid due to thin double layers that attract and repel ions in an electrolyte, can be described by the Stokes equations  with a forcing term that depends on the charge of the electrolyte and an externally applied electric field. In a simplified scenario, the flow itself or its pressure gradients are not dominant enough to influence the distribution of the double layer electrostatic potential, and so the electric charge density can be simply related to the potential using (even simplified versions of) the Poisson--Boltzmann equation. 

Regarding unique solvability and finite element (FE) discretisation for the Poisson--Boltzmann equation we refer to the work \cite{chen07} (see also, e.g., \cite{holst12,iglesias22,kraus20}) that use, for example, a splitting between regular and singular contribution expansion and apply convex minimisation arguments, and show an $L^\infty$ bound for the solution using a cutoff-function approach. Here we take the regularised counterpart of this equation, which maintains the same type of  nonlinearity (a hyperbolic sine) but does not include the distributional Dirac forcing terms. In addition, here we  also include an advection term in the regularised potential equation, and following \cite{holst12} we restrict the functional space of the double layer potential to a bounded set (in turn, permitting us to have a bounded nonlinear operator). 

\paragraph{Outline.}  The remainder of this section presents the strong form of the coupled system. Section~\ref{sec:weak} is devoted to the weak formulation and well-posedness  using Banach fixed-point theory. In Section~\ref{sec:fem} we show the existence and uniqueness of discrete solution, and outline the a priori error analysis. Finally, in Section~\ref{sec:results} we provide numerical examples of convergence and fully developed electro-osmotic flows in  eccentric micro-tubes. 

\paragraph{The Stokes--Poisson--Boltzmann equations.} Let us consider a Lipschitz bounded domain in $\mathbb{R}^n$, $n=2,3$ with boundary $\partial\Omega = \Gamma_D \cup \Gamma_N$ split into two parts where different types of boundary conditions will be considered, and denote by $\bn$ the outward unit normal vector on the boundary.  The domain is filled with an incompressible fluid subjected to  pressure gradients and electric charges in an electrolyte. The set of equations that govern the stationary regime are written in {terms} of fluid velocity $\bu$, pressure $p$, and electrostatic double layer potential $\psi$, and read as follows 
\begin{subequations}
\label{eq:strong}
\begin{align}
-\mu \bDelta\bu + \nabla p & = \ff - \varepsilon \Delta \psi \bE & \text{in } \Omega, \label{eq:mom}\\
\vdiv \bu & = 0 & \text{in } \Omega,\label{eq:mass}\\
\kappa(\psi) +  \bu \cdot \nabla \psi - \varepsilon \Delta \psi & = g & \text{in } \Omega, \label{eq:pot}\\
\bu = \cero \quad \text{and} \quad \psi & = 0 & \text{on } \Gamma_D, \label{eq:bcD}\\
(\mu \bnabla \bu - p\bI)\bn = \cero \quad \text{and}\quad \varepsilon\nabla \psi \cdot \bn & = 0  & \text{on } \Gamma_N.\label{eq:bcN}
\end{align}\end{subequations}
Here $\bI$ is the identity tensor in $\mathbb{R}^{n\times n}$, $\mu$ is the fluid viscosity, $\ff \in \bL^2(\Omega)$ is a vector  of external body forces, $\bE \in \bL^\infty(\Omega)$ is an externally applied electric field (typically along the longitudinal direction, and assumed uniformly bounded by $\bar{E}>0$), $\varepsilon$ is the electric permittivity of the electrolyte, $g\in L^2(\Omega)$ is an external source/sink of potential, and 
$ \kappa(\psi) = k_0 \sinh(k_1 \psi)$ 
is the charge of the electrolyte as a nonlinear function of potential, where $k_0>0$ depends on the valence, the electron charge, and the bulk ion concentration, and $k_1>0$ depends additionally on the Boltzmann constant and the reference absolute temperature.  
Following \cite[Lem. 2.1]{holst12} we can assume that 
\begin{itemize}
\item the potential is uniformly bounded between the values $\alpha \leq 0 \leq \beta \in \mathbb{R}$. 
\end{itemize}
In addition, we assume $\kappa(0) = 0$, and that there exists $\bar{K},\underline{K}>0$ such that 
\begin{itemize}
\item 
$|\kappa(s_1) - \kappa(s_2)| \leq \bar{K} |s_1-s_2|$ for all $s_1,s_2 \in [\alpha, \beta]$, 
 \item 
 $|\kappa(s_1) - \kappa(s_2) | \geq \underline{K} |s_1-s_2|$ for all $s_1,s_2\in [\alpha, \beta]$. 
 \end{itemize}
 Homogeneity of \eqref{eq:bcD}-\eqref{eq:bcN} is only assumed to simplify the exposition, however the results remain valid for more general assumptions. 
%
\section{Existence and uniqueness of weak solution}\label{sec:weak}
\paragraph{Weak formulation.}
Consider 
the following  test and trial functional spaces for velocity, double layer potential, and pressure, respectively 
\begin{gather*}
\bV: = \{ \bv \in \bH^1(\Omega): \bv = \cero \ \text{on} \ \Gamma_D\},\qquad \Phi_0 := \{\varphi \in H^1(\Omega): \varphi = 0 \ \text{on} \ \Gamma_D\},\\ \Phi := \{\varphi \in H^1(\Omega): \varphi = 0 \ \text{on} \ \Gamma_D  \ \text{and} \ \alpha \leq \varphi \leq \beta\}, \qquad  \rQ:= L^2(\Omega).
 \end{gather*}
Then, multiplying \eqref{eq:mom}-\eqref{eq:pot} by suitable test functions, integrating by parts, and employing the boundary conditions, we are left with the following weak formulation: find $(\bu, p, \psi) \in \bV \times \rQ \times \Phi$ such that 
\begin{subequations}\label{eq:weak}
\begin{align}
a(\bu, \bv) + A^{\psi}(\bu,\bv) + b(\bv,p) & = F^\psi(\bv) & \forall \bv \in \bV,\\
b(\bu, q) & = 0 & \forall q \in \rQ,\\
(\kappa(\psi),\varphi) + c(\bu; \psi, \varphi)+ d(\psi,\varphi) & = G(\varphi) & \forall \varphi \in \Phi_0,
\end{align}
\end{subequations}
where the following bilinear and trilinear forms are used 
\begin{gather*}
a(\bu,\bv) := \mu \int_\Omega \bnabla \bu : \bnabla \bv, \quad b(\bv, q):= - \int_\Omega \vdiv\bv\,q, \\ c(\bv; \psi, \varphi): = \int_\Omega (\bv\cdot \nabla \psi)\varphi, \quad 
d(\psi,\varphi) := \varepsilon\int_\Omega \nabla \psi\cdot\nabla\varphi,
\end{gather*}
as well as (for a fixed $\hat{\psi}$) the following \emph{linear} functionals and bilinear form 
\[ F^{\hat{\psi}}(\bv):= \!\int_\Omega \{\ff + [g - \kappa(\hat{\psi})]\bE\}\cdot \bv, \ G(\varphi):= \!\int_\Omega g\,\varphi, \ A^{\hat{\psi}}(\bu,\bv):= \!\int_\Omega [\bu\cdot\nabla\hat{\psi}](\bE\cdot\bv). \]
{Note that the specific forms of $F^\psi(\bullet)$ and $A^\psi(\bullet,\bullet)$ do not have the electric permittivity $\varepsilon$, since the weak formulation we propose comes from} rewriting the last source term in the right-hand side of \eqref{eq:mom} using the potential equation \eqref{eq:pot}.  {This has the advantage that we do not need to integrate by parts the term containing the Laplacian of the potential in the momentum equation, and it gives us a more convenient structure for the fixed-point analysis below.} The bilinear and trilinear forms above are uniformly bounded{, that is, for all $\bu$, $\bv \in \bV$, $q \in \rQ$, $\psi$, $\hat{\psi} \in \Phi$ and $\varphi \in \Phi_0$, the following inequalities hold}
\begin{gather*}\vert a(\bu,\bv)\vert \leq \mu\, \Vert \bu\Vert_{1,\Omega}\Vert \bv\Vert_{1,\Omega}, \quad 
|b(\bv,q)| \leq \|\bv\|_{1,\Omega}\|q\|_{0,\Omega}, \\|d(\psi,\varphi)| \leq \varepsilon\|\psi\|_{1,\Omega}\|\varphi\|_{1,\Omega}, \quad 
|{c}(\bv;\psi,\varphi)|\leq C_{\rS}^2 \Vert\bv\Vert_{1,\Omega}\,\Vert\psi\Vert_{1,\Omega}\, \Vert\varphi\Vert_{1,\Omega}, \\ |A^{\hat{\psi}}(\bu,\bv)| \leq C_{\rS}^2 \bar{E}\Vert\hat{\psi}\Vert_{1,\Omega} \Vert \bu\Vert_{1,\Omega}\Vert \bv\Vert_{1,\Omega},
\end{gather*}
where it suffices to apply  Hölder's inequality and the estimate $\|\phi\|_{L^4(\Omega)}\leq C_{\rS} \|\phi\|_{1,\Omega}$, 
valid for all $\phi\in H^1(\Omega)$ (cf. \cite[Th. 1.3.3]{quarteroni94}). So are the linear functionals  in $\bV'$ (under the assumption that $\hat{\psi}$ is in a bounded set), and $\Phi'$
\[ | F^{\hat{\psi}}(\bv) | \leq \bigl[\|\ff\|_{0,\Omega} + \bar{E}\|g\|_{0,\Omega} + \bar{K}\bar{E}\|\hat{\psi}\|_{1,\Omega}\bigr] \|\bv\|_{1,\Omega}, \quad 
|G(\varphi)| \leq \|g\|_{0,\Omega} \|\varphi\|_{1,\Omega}.\]
Using the Poincar\'e inequality $\|\phi\|_{1,\Omega}\leq C_{\rp} |\phi|_{1,\Omega}$, with $C_{\rp}>0$ and valid for all $\phi\in\{\varphi \in H^1(\Omega): \varphi = 0 \ \text{on} \ \Gamma_D\}$, it is not difficult to see that the bilinear forms $a(\bullet,\bullet)$ and $d(\bullet,\bullet)$ are coercive in $\bV$  and $\Phi$, respectively, 
\begin{equation*}
|a(\bv,\bv)|\geq {\mu}[C_{\rp}^2]^{-1}\Vert \bv\Vert_{1,\Omega}^2  \quad \forall \bv \in \bV\, , \quad  
|d(\varphi,\varphi)|\geq \varepsilon[{C_{\rp}^2}]^{-1}\Vert \varphi\Vert_{1,\Omega}^2 \quad \forall \varphi \in \Phi_0.  \end{equation*}
Furthermore, $b(\bullet,\bullet)$ satisfies the following inf-sup condition (see, e.g., \cite{ern04})
\begin{equation*}
\sup_{\bv \in \bV\setminus\{\cero\}}\frac{b(\bv,q)}{\|\bv\|_{1,\Omega}}  \geq \beta \, \|q\|_{0,\Omega} \quad \forall q\in \rQ.
\end{equation*}
\paragraph{Well-posedness analysis.} We will employ a fixed-point argument in combination with the Babu\v{s}ka--Brezzi theory \cite{boffi13} and the Minty--Browder theorem \cite{ciarlet13}. The analysis closely follows the approach in, e.g., \cite{alvarez21,caucao20} for Boussinesq-type of problems where one separates the incompressible flow equations from the nonlinear transport equation and then connects them back using Banach's fixed-point theorem. 
First, consider the following set
\begin{equation}\label{eq:def-W-psi}
 \rZ := \bigl\{\hat\psi \in \Phi:\quad  \|\hat\psi\|_{1,\Omega} \leq {\mu}[{2 C_\rp^2 C_\rS^2 \bar{E} }]^{-1} \bigr\},
\end{equation}
 and, for a fixed $\hat{\psi} \in \rZ$, the problem of finding $(\bu,p)\in \bV\times \rQ$ such that 
\begin{equation}\label{eq:Stokes}
\begin{array}{clc}
 a(\bu, \bv) + A^{\hat{\psi}} (\bu,\bv) + b(\bv,p) & = F^{\hat{\psi}}(\bv) & \forall \bv \in \bV,\\
b(\bu, q) & = 0 & \forall q \in \rQ.
\end{array}
    \end{equation}
Owing to the unique solvability of the Stokes equations with mixed boundary conditions  \cite{ern04} (in turn derived from the following properties of $[a+A^{\hat{\psi}}](\bullet,\bullet)$
\begin{align}\label{eq:a+A-prop}
\nonumber \vert a(\bu,\bv) + A^{\hat{\psi}}(\bu,\bv)\vert &\leq \bigl(\mu + {\mu}[{2 C_\rp^2}]^{-1} \bigr)\, \Vert \bu\Vert_{1,\Omega}\Vert \bv\Vert_{1,\Omega}, \\ 
\vert a(\bv,\bv) + A^{\hat{\psi}}(\bv,\bv)\vert &\geq {\mu}[{2 C_\rp^2}]^{-1}\,\Vert \bv\Vert_{1,\Omega}^2, 
\end{align}
which arise from the properties of $a(\bullet,\bullet)$ and $A^{\hat{\psi}}(\bullet,\bullet)$, the definition of $\rZ$, and  the continuity and stability of $b(\bullet,\bullet)$), we conclude that the operator
\[\cS^{\mathrm{flow}}: \Phi \to \bV \times \rQ, \quad \hat{\psi} \mapsto \cS^{\mathrm{flow}}(\hat{\psi}) = (\cS^{\mathrm{flow}}_1(\hat{\psi}),\cS^{\mathrm{flow}}_2(\hat{\psi})):= (\bu, p),\]
is well-defined. Moreover, the continuous dependence on data provided by the Babu\v{s}ka--Brezzi theory gives, in particular, that 
\begin{equation}\label{eq:stab-u}
\|\bu\|_{1,\Omega} \leq 2 C_\rp^2\mu^{-1} \bigl( \|\ff\|_{0,\Omega} + \bar{E}\| g\|_{0,\Omega} + \bar{K}\bar{E}\|\hat{\psi}\|_{1,\Omega}\bigr). 
\end{equation}
On the other hand, associated with the velocity, we define the following set
\begin{equation}\label{eq:def-W-u}
 \bW := \bigl\{\hat\bu \in \bV:\quad  \|\hat\bu\|_{1,\Omega} \leq {\varepsilon}[{2 C_\rp^2 C_\rS^2}]^{-1} \bigr\}, 
\end{equation}
and consider, for a fixed $\hat{\bu}\in \bW$, the problem of finding $\psi \in \Phi$ such that 
\begin{equation}\label{eq:Boltzmann}
(\kappa(\psi),\varphi) + c(\hat{\bu}; \psi, \varphi) + d(\psi,\varphi)  = G(\varphi) \qquad  \forall \varphi \in \Phi_0.
\end{equation}
Using H\"older and Cauchy--Schwarz inequalities we proceed to verify that the operator associated with this problem is bounded 
\[|(\kappa(\psi),\varphi) + c(\hat{\bu}; \psi, \varphi) + d(\psi,\varphi)| \leq (\bar{K} + \varepsilon + C_{\rS}^2\|\hat{\bu}\|_{1,\Omega} )\,  \|\psi\|_{1,\Omega}\|\varphi\|_{1,\Omega}.\]
Next we can use the coercivity of $d(\bullet,\bullet)$, the definition of $\bW$, and the monotonicity of   $\kappa(\bullet)$, to obtain the strong monotonicity  of the solution operator
\begin{align}\label{eq:strong-monotonicity}
\nonumber
&(\kappa(\psi_1) - \kappa(\psi_2), \psi_1-\psi_2)+ c(\hat{\bu}; \psi_1-\psi_2,\psi_1-\psi_2)+ d(\psi_1-\psi_2,\psi_1-\psi_2)\\
&\qquad
 \geq {\varepsilon}[{2 C_\rp^2}]^{-1} \|\psi_1-\psi_2\|^2_{1,\Omega} \qquad \forall\,\psi_1,\psi_2\in \Phi.
\end{align}
Then{,} the Minty--Browder theorem gives that 
$\exists !\, \psi \in \Phi$ so the map 
\[\cS^{\mathrm{elec}}: \bV \to \Phi, \quad \hat{\bu} \mapsto \cS^{\mathrm{elec}}(\hat{\bu}) = \psi,\]
is well-defined. In addition, using  \eqref{eq:strong-monotonicity}, we have that the solution satisfies 
\begin{equation}\label{eq:stab-psi}
\|\psi\|_{1,\Omega} \leq {2C_\rp^2}{\varepsilon}^{-1}\|g\|_{0,\Omega}.
\end{equation}
Then{,} we introduce an operator equivalent to the solution map of \eqref{eq:weak}: 
\begin{equation}\label{eq:def-T}
\bT: \bW \to \bW, \qquad {\hat\bu} \mapsto \bT({\hat\bu}) := \cS^{\mathrm{flow}}_1(\cS^{\mathrm{elec}}(\hat\bu)).
\end{equation}
\begin{lemma}\label{lem:ball-mapping}
Assume that $\ff \in \bL^2(\Omega)$ and $g \in L^2(\Omega)$ {satisfy}
\begin{equation}\label{eq:small-data-1}
4C_\rp^4 C_\rS^2[\mu\varepsilon]^{-1}\bigl( 1 + \bar{E} + 2\bar{K}\bar{E}C_\rp^2 {\varepsilon}^{-1}  \bigr)(\|\ff\|_{0,\Omega} + \|g\|_{0,\Omega}) \leq 1.
\end{equation}
Then, the  operator $\bT$ is well-defined and $\bT(\bW)\subseteq \bW$.
\end{lemma}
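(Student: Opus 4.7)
\medskip

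\noindent\textbf{Proof plan.} The plan is to unfold the composition $\bT = \cS^{\mathrm{flow}}_1\circ \cS^{\mathrm{elec}}$ step by step, at each step invoking the well-posedness result that justifies the corresponding building block and collecting the associated a priori bound. The small-data hypothesis \eqref{eq:small-data-1} is then used to show that the accumulated constants fit inside the radii that define $\rZ$ and $\bW$.

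\medskip

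\noindent\emph{Step 1 (electric solve).} For a given $\hat\bu\in\bW$, the preceding discussion (boundedness, coercivity of $d(\bullet,\bullet)$, monotonicity of $\kappa$, and the smallness of $\|\hat\bu\|_{1,\Omega}$ encoded in the definition of $\bW$, which guarantees the positive-definiteness \eqref{eq:strong-monotonicity}) together with the Minty--Browder theorem ensure that $\psi=\cS^{\mathrm{elec}}(\hat\bu)\in\Phi$ exists, is unique, and satisfies \eqref{eq:stab-psi}. So at the end of this step I have $\|\psi\|_{1,\Omega}\le 2C_\rp^2\varepsilon^{-1}\|g\|_{0,\Omega}$.

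\medskip

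\noindent\emph{Step 2 (checking $\psi\in\rZ$).} In order to feed $\psi$ into $\cS^{\mathrm{flow}}$ I need to verify that $\psi$ lies in the set $\rZ$ from \eqref{eq:def-W-psi}. Combining \eqref{eq:stab-psi} with the small-data condition \eqref{eq:small-data-1}, I would extract the subinequality
\[
 4 C_\rp^4 C_\rS^2 \bar E [\mu\varepsilon]^{-1} \|g\|_{0,\Omega} \le 1,
\]
which is weaker than \eqref{eq:small-data-1} and equivalent to $2C_\rp^2\varepsilon^{-1}\|g\|_{0,\Omega}\le \mu[2C_\rp^2 C_\rS^2 \bar E]^{-1}$; this is exactly the membership in $\rZ$.

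\medskip

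\noindent\emph{Step 3 (flow solve and closure in $\bW$).} With $\psi\in\rZ$, the ellipticity \eqref{eq:a+A-prop}, continuity of the bilinear forms, and the inf-sup condition for $b$ yield, via the Babu\v ska--Brezzi theory, a unique pair $(\bu,p)=\cS^{\mathrm{flow}}(\psi)\in\bV\times\rQ$ satisfying \eqref{eq:stab-u}. Plugging the bound from Step~1 into \eqref{eq:stab-u} gives
\[
 \|\bu\|_{1,\Omega} \le 2C_\rp^2\mu^{-1}\bigl(\|\ff\|_{0,\Omega} + \bar E \|g\|_{0,\Omega} + 2\bar K\bar E C_\rp^2\varepsilon^{-1}\|g\|_{0,\Omega}\bigr).
\]
To conclude $\bT(\hat\bu)=\bu\in\bW$ it suffices to show this bound is $\le \varepsilon[2C_\rp^2 C_\rS^2]^{-1}$, i.e.
\[
 4C_\rp^4 C_\rS^2[\mu\varepsilon]^{-1}\bigl(\|\ff\|_{0,\Omega} + (\bar E + 2\bar K\bar E C_\rp^2\varepsilon^{-1})\|g\|_{0,\Omega}\bigr) \le 1,
\]
which is precisely what \eqref{eq:small-data-1} delivers, since the factor $\bigl(1+\bar E + 2\bar K\bar E C_\rp^2\varepsilon^{-1}\bigr)$ multiplying $(\|\ff\|_{0,\Omega}+\|g\|_{0,\Omega})$ majorises both $\|\ff\|_{0,\Omega}$ and the coefficient of $\|g\|_{0,\Omega}$ above.

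\medskip

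\noindent\emph{Main difficulty.} The argument is structurally straightforward once the two auxiliary problems are in place; the only subtle point is bookkeeping. The hypothesis \eqref{eq:small-data-1} is engineered to play a double role: it is strong enough to force $\psi\in\rZ$ (so that the flow sub-problem is genuinely well-posed rather than merely formal) and simultaneously to absorb the nonlinear feedback term $\bar K\bar E\|\psi\|_{1,\Omega}$ back into a bound that keeps $\bu$ inside $\bW$. Making sure these two thresholds are both encoded in a single clean inequality, with no circular dependence between the radii of $\rZ$ and $\bW$, is where care is needed; the ordering of the two checks (first $\psi\in\rZ$, then $\bu\in\bW$) together with the composite factor in \eqref{eq:small-data-1} resolves this.
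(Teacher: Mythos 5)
Your proposal is correct and follows essentially the same route as the paper: extract the weaker subinequality $4C_\rp^4 C_\rS^2\bar E[\mu\varepsilon]^{-1}\|g\|_{0,\Omega}\le 1$ from \eqref{eq:small-data-1} to place $\cS^{\mathrm{elec}}(\hat\bu)$ in $\rZ$, then combine \eqref{eq:stab-u} with \eqref{eq:stab-psi} and the full hypothesis \eqref{eq:small-data-1} to conclude $\bT(\hat\bu)\in\bW$. The only difference is that you spell out the final majorisation explicitly, which the paper leaves implicit.
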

\begin{proof}
Given $\hat\bu\in\bW$,  \eqref{eq:small-data-1} implies  $\frac{4C_\rp^4 C_\rS^2 \bar{E}}{\mu \varepsilon} \|g\|_{0,\Omega}\leq 1$. Combining this with \eqref{eq:stab-psi} gives $\cS^{\mathrm{elec}}(\hat{\bu}) \in \rZ$. Using that $\cS^{\mathrm{elec}}$ and $\cS^{\mathrm{flow}}$ are well-defined, it follows that $\bT$ is well-defined. Moreover, from \eqref{eq:stab-u} and \eqref{eq:stab-psi}, we obtain
\begin{align*}
\|\cS^{\mathrm{flow}}_1(\cS^{\mathrm{elec}}(\hat\bu))\|_{1,\Omega}
&\leq 2 C_\rp^2\mu^{-1} \bigl( \|\ff\|_{0,\Omega} + \bar{E}\| g\|_{0,\Omega} + \bar{K}\bar{E}{2C_\rp^2}{\varepsilon}^{-1}\|g\|_{0,\Omega}\bigr).
\end{align*}
Combining this with the small data assumption \eqref{eq:small-data-1}, 
we conclude that $\bT(\hat\bu)\in\bW$, thereby completing the proof.
\end{proof}

\begin{theorem}\label{th:main-continuous}
Assume that \eqref{eq:small-data-1} holds, and assume that
\begin{equation}\label{eq:small-data-2}
4 C_\rp^4 C_\rS^2 \bar{E}[\mu \varepsilon^2]^{-1}\,(\varepsilon + 2 C_\rp^2\bar{K} ) \,\|g\|_{0,\Omega} < 1.
\end{equation}
Then,   $\bT$ has a unique {fixed point} $\bu \in \bW$. Equivalently,   problem \eqref{eq:weak} has a unique solution $(\bu, p, \psi) \in \bV \times \rQ \times \Phi$ with $\bu \in \bW$. Moreover, 
\begin{equation}\label{eq:stability}
\|\psi\|_{1,\Omega} \lesssim \|g\|_{0,\Omega} \qan \|\bu\|_{1,\Omega} + \|p\|_{0,\Omega} \lesssim
  \|\ff\|_{0,\Omega} + \| g\|_{0,\Omega}.
\end{equation}
\end{theorem}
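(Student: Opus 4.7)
The plan is to invoke Banach's fixed-point theorem for $\bT$. Since Lemma~\ref{lem:ball-mapping} already delivers $\bT(\bW)\subseteq\bW$, what remains is to show that, under the additional assumption \eqref{eq:small-data-2}, the operator $\bT$ is a strict contraction on $\bW$. Given $\hat\bu_1,\hat\bu_2\in\bW$, set $\psi_i:=\cS^{\mathrm{elec}}(\hat\bu_i)$ and $(\bu_i,p_i):=\cS^{\mathrm{flow}}(\psi_i)$. I will estimate $\|\psi_1-\psi_2\|_{1,\Omega}$ in terms of $\|\hat\bu_1-\hat\bu_2\|_{1,\Omega}$, then $\|\bu_1-\bu_2\|_{1,\Omega}$ in terms of $\|\psi_1-\psi_2\|_{1,\Omega}$, and finally compose the two.

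For the first estimate, subtract the potential equations tested against $\varphi\in\Phi_0$, split the trilinear form as $c(\hat\bu_1;\psi_1,\varphi)-c(\hat\bu_2;\psi_2,\varphi)=c(\hat\bu_1;\psi_1-\psi_2,\varphi)+c(\hat\bu_1-\hat\bu_2;\psi_2,\varphi)$, take $\varphi=\psi_1-\psi_2$, apply the strong monotonicity \eqref{eq:strong-monotonicity} on the left and the continuity of $c(\bullet;\bullet,\bullet)$ on the right, and use \eqref{eq:stab-psi} to bound $\|\psi_2\|_{1,\Omega}$. This yields
\[
\|\psi_1-\psi_2\|_{1,\Omega}\leq \tfrac{4 C_\rp^4 C_\rS^2}{\varepsilon^{2}}\|g\|_{0,\Omega}\,\|\hat\bu_1-\hat\bu_2\|_{1,\Omega}.
\]

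For the second estimate, subtract the Stokes systems and split $A^{\psi_1}(\bu_1,\bv)-A^{\psi_2}(\bu_2,\bv)=A^{\psi_1}(\bu_1-\bu_2,\bv)+A^{\psi_1-\psi_2}(\bu_2,\bv)$. Testing with $\bv=\bu_1-\bu_2$ kills the pressure term (by discrete divergence-freeness of $\bu_1-\bu_2$), and the coercivity bound in \eqref{eq:a+A-prop} (valid since $\psi_1\in\rZ$ by the argument in Lemma~\ref{lem:ball-mapping}) controls the left-hand side by $\tfrac{\mu}{2C_\rp^2}\|\bu_1-\bu_2\|_{1,\Omega}^2$. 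The right-hand side splits into a contribution from $A^{\psi_1-\psi_2}(\bu_2,\cdot)$, bounded using $\|\bu_2\|_{1,\Omega}\leq \varepsilon[2C_\rp^2 C_\rS^2]^{-1}$ since $\bu_2\in\bW$, and a contribution from $F^{\psi_1}-F^{\psi_2}$, which uses the Lipschitz constant $\bar K$ of $\kappa$. Combining gives
\[
\|\bu_1-\bu_2\|_{1,\Omega}\leq \tfrac{\bar E(\varepsilon+2C_\rp^2\bar K)}{\mu}\,\|\psi_1-\psi_2\|_{1,\Omega}.
\]

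Composing the two inequalities produces exactly the prefactor appearing in \eqref{eq:small-data-2}, so $\bT$ is a contraction with constant strictly less than one, giving a unique $\bu\in\bW$ with $\bT(\bu)=\bu$. Setting $\psi:=\cS^{\mathrm{elec}}(\bu)$ and $(\bu,p):=\cS^{\mathrm{flow}}(\psi)$ (whose existence for the pressure part uses the inf-sup condition for $b$) yields a solution of \eqref{eq:weak}; conversely, any solution with $\bu\in\bW$ produces a fixed point of $\bT$, hence the solution is unique. The stability bounds \eqref{eq:stability} follow directly from \eqref{eq:stab-psi} and \eqref{eq:stab-u} evaluated at the fixed point, together with the inf-sup estimate on $p$ extracted from the momentum equation and the continuity of $a$, $A^{\psi}$ and $F^{\psi}$.

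The main obstacle is purely algebraic: ensuring the two splittings of the nonlinear coupling terms are done on the correct side (using $\psi_1$ in the convective term and $\bu_2$ in the drag-like term, as dictated by the available coercivity/monotonicity) so that the $\bW$- and $\rZ$-bounds on $\bu_2$ and $\psi_2$ can be used to absorb higher-order factors; once that choice is made, all the continuity constants assemble cleanly into the contraction factor \eqref{eq:small-data-2}.
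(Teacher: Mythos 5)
Your proposal is correct and follows essentially the same route as the paper: the same $\pm c(\hat\bu_1;\psi_2,\varphi)$ and $\pm A^{\psi_1}(\bu_2,\bv)$ splittings, strong monotonicity and the coercivity of $a+A^{\psi_1}$ (with $\psi_1\in\rZ$ and $\bu_2\in\bW$ via Lemma~\ref{lem:ball-mapping}), composition of the two Lipschitz bounds to recover exactly the constant in \eqref{eq:small-data-2}, Banach's fixed-point theorem, and stability from \eqref{eq:stab-psi}, \eqref{eq:stab-u} and the inf-sup condition. The only nitpick is the phrase ``discrete divergence-freeness'': at this continuous level the pressure term vanishes simply because $b(\bu_1-\bu_2,q)=0$ for all $q\in\rQ$.
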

\begin{proof}
Given $\hat\bu_1,  \hat\bu_2 \in \bW$, we let $\psi_1, \psi_2 \in \Phi_0$ such that $\cS^{\mathrm{elec}}(\hat\bu_1) = \psi_1$ and $\cS^{\mathrm{elec}}(\hat\bu_2) = \psi_2$. Using \eqref{eq:Boltzmann}, adding  $\pm c(\hat\bu_1, \psi_2, \varphi)$, taking $\varphi = \psi_1 - \psi_2$, and applying  \eqref{eq:strong-monotonicity} along with the boundedness of $c(\bullet;\bullet,\bullet)$, we obtain
\begin{equation}\label{eq:lips-elec}
\|\cS^{\mathrm{elec}}(\hat\bu_1) - \cS^{\mathrm{elec}}(\hat\bu_2)\|_{1,\Omega}\leq 2C_\rS^2C_\rp^2\varepsilon^{-1}\|\psi_2\|_{1,\Omega} \|\hat\bu_1 - \hat\bu_2\|_{1,\Omega}.
\end{equation}
Similarly, let $\hat\psi_1, \hat\psi_2 \in \Phi$ and $(\bu_1, p_1),  (\bu_2, p_2) \in \bV \times \rQ$, such that $\cS^{\mathrm{flow}}(\hat{\psi}_1) = (\bu_1, p_1)$ and $\cS^{\mathrm{flow}}(\hat{\psi}_2) = (\bu_2, p_2)$. Using \eqref{eq:Stokes}, adding $\pm A^{\hat{\psi}_1}(\bu_2, \bv)$, taking {$\bv = \bu_1 - \bu_2$}, and applying the coercivity of $a + A^{\hat{\psi}}(\bullet,\bullet)$
(c.f. \eqref{eq:a+A-prop}), along with the continuity of $A^{\hat{\psi}}(\bullet,\bullet)$ and the assumptions for $\kappa(\bullet)$, we have
\begin{equation}\label{eq:lips-flow}
\|\cS^{\mathrm{flow}}_1(\hat\psi_1) - \cS^{\mathrm{flow}}_1(\hat\psi_2)\|_{1,\Omega} \leq 2 C_\rp^2 \bar{E}\mu^{-1} (C_\rS^2 \|\bu_2\|_{1,\Omega} + \bar{K}) \|\hat\psi_1 - \hat\psi_2\|_{1,\Omega}.
\end{equation}
Thus, combining \eqref{eq:lips-elec} with \eqref{eq:lips-flow}, we have
\begin{align*}
&\|\bT(\hat\bu_1) - \bT(\hat\bu_2)\|_{1,\Omega} \leq  \|\cS^{\mathrm{flow}}_1(\cS^{\mathrm{elec}}(\hat\bu_1)) - \cS^{\mathrm{flow}}_1(\cS^{\mathrm{elec}}(\hat\bu_2))\|_{1,\Omega}\\
&\quad \leq  2 C_\rp^2 \bar{E}\mu^{-1} (C_\rS^2 \|\bu_2\|_{1,\Omega} + \bar{K}) 2C_\rS^2C_\rp^2 \varepsilon^{-1} \|\cS^{\mathrm{elec}}(\hat\bu_2)\|_{1,\Omega} \|\hat\bu_1 - \hat\bu_2\|_{1,\Omega},
\end{align*}
then, using the fact that $\cS^{\mathrm{elec}}(\hat\bu_2)$ satisfies \eqref{eq:stab-psi} and that $\hat\bu_2$ {(and therefore, thanks to Lemma~\ref{lem:ball-mapping}, also $\bu_2$) lives in $\bW$}, we obtain
\begin{equation*}
\|\bT(\hat\bu_1) - \bT(\hat\bu_2)\|_{1,\Omega}  \leq  {8 C_\rp^6 C_\rS^2 \bar{E}}[{\mu \varepsilon^2}]^{-1} \bigl({\varepsilon}[{2 C_\rp^2}]^{-1} + \bar{K} \bigr) \,\|g\|_{0,\Omega} \|\hat\bu_1 - \hat\bu_2\|_{1,\Omega},
\end{equation*}
which together with \eqref{eq:small-data-2} and the Banach fixed-point theorem implies that $\bT$ has a unique fixed point in $\bW$.  
Finally, the first estimate in \eqref{eq:stability} is derived similarly to \eqref{eq:stab-psi}, while the second follows directly from \cite[Th.    2.34]{ern04}.
\end{proof}

\section{Finite element discretisation}\label{sec:fem}
Let us denote $\cT_h$ a shape-regular simplicial mesh of $\Omega$ with mesh-size $h:=\max\{h_K: K\in \cT_h\}$. Given  $k\geq1$,  the generalised Taylor--Hood FE spaces for the approximation of velocity, pressure, and  potential, are 
	\begin{align*}
		\bV_h& :=\{\bv_h\in \bC^0(\Omega): \bv_h\vert_K\in[\mathbb{P}_{k+1}(K)]^n,\ \forall K\in\cT_h,\ \bv_h = \cero \ \text{on}\ \Gamma_D\},\\
		\rQ_h& :=\{q_h\in C^0(\Omega): q_h\vert_K\in \mathbb{P}_{k}(K),\,K\in\cT_h\},\\
		\Phi_h& :=\{\varphi_h\in C^0(\Omega): \varphi_h\vert_K\in \mathbb{P}_{k+1}(K),\ \forall K\in\cT_h,\ \varphi_h = 0 \ \text{on}\ \Gamma_D\}.
		\end{align*}
The FE scheme then reads: find $(\bu_h,p_h,\psi_h)\in \bV_h\times \rQ_h\times \Phi_h$ such that 
\begin{subequations}
\label{eq:weak-h}
\begin{align}
a(\bu_h, \bv_h) + A^{\psi_h}(\bu_h,\bv_h) + b(\bv_h,p_h) & = F^{\psi_h}(\bv_h) & \forall \bv_h \in \bV_h,\\
b(\bu_h, q_h) & = 0 & \forall q_h \in \rQ_h,\\
(\kappa(\psi_h),\varphi_h) + c(\bu_h; \psi_h, \varphi_h) + d(\psi_h,\varphi_h) & = G(\varphi_h) & \forall \varphi_h \in \Phi_{h}.
\end{align}
\end{subequations}
The unique solvability of the discrete problem can be shown using similar techniques as for the continuous counterpart. 
\begin{theorem}\label{th:main-discrete}
Assume that the {hypothesis} of Theorem \ref{th:main-continuous} holds. Then,  \eqref{eq:weak-h} has a unique solution $(\bu_h, p_h, \psi_h) \in \bV_h \times \rQ_h \times \Phi_h$. 
Moreover, 
\begin{equation}\label{eq:stability-h}
\|\psi\|_{1,\Omega} \lesssim 
\|g\|_{0,\Omega} \qan \|\bu\|_{1,\Omega} + \|p\|_{0,\Omega} \lesssim 
 \|\ff\|_{0,\Omega} + \| g\|_{0,\Omega} 
 .
\end{equation}
\end{theorem}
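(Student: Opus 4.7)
The plan is to mirror the continuous analysis of Theorem~\ref{th:main-continuous} by building discrete solution operators and closing a discrete fixed-point argument on the same balls used in Section~\ref{sec:weak}. First, I would set $\rZ_h := \rZ \cap \Phi_h$ and $\bW_h := \bW \cap \bV_h$ and, for a fixed $\hat\psi_h \in \rZ_h$, consider the discrete analogue of \eqref{eq:Stokes} posed on $\bV_h\times \rQ_h$. The coercivity and continuity estimates for $a+A^{\hat\psi_h}$ collected in \eqref{eq:a+A-prop} depend only on membership of $\hat\psi_h$ in $\rZ$ and on the Sobolev and Poincar\'e constants, so they transfer unchanged to the discrete setting. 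Combined with the well-known discrete inf-sup condition for Taylor--Hood pairs with $k\geq 1$ on shape-regular meshes, the discrete Babu\v{s}ka--Brezzi theory yields a well-defined operator $\cS^{\mathrm{flow}}_h(\hat\psi_h)=(\bu_h,p_h)$ obeying the same stability estimate as \eqref{eq:stab-u}.

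Next, for a fixed $\hat\bu_h\in\bW_h$, the discrete electrostatic subproblem on $\Phi_h$ inherits the strong monotonicity \eqref{eq:strong-monotonicity} and the boundedness bounds from its continuous counterpart, because these use only monotonicity of $\kappa$, coercivity of $d(\bullet,\bullet)$, and the definition of $\bW$. In the finite-dimensional setting, continuity plus strong monotonicity already guarantees existence and uniqueness (a finite-dimensional version of Minty--Browder, equivalently a direct corollary of Brouwer's fixed-point theorem applied to the monotone map), producing a well-defined $\cS^{\mathrm{elec}}_h(\hat\bu_h)=\psi_h$ that satisfies the analogue of \eqref{eq:stab-psi}.

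With both discrete operators at hand, I would define $\bT_h:\bW_h \to \bW_h$ by $\bT_h(\hat\bu_h):=\cS^{\mathrm{flow}}_{h,1}(\cS^{\mathrm{elec}}_h(\hat\bu_h))$. The ball-mapping argument in Lemma~\ref{lem:ball-mapping} transfers verbatim under \eqref{eq:small-data-1} to show $\bT_h(\bW_h)\subseteq \bW_h$, and the Lipschitz estimates \eqref{eq:lips-elec}--\eqref{eq:lips-flow} also transfer, since they rely only on structural properties (inf-sup, coercivity, monotonicity, Sobolev embeddings) shared by the continuous and discrete formulations. Under \eqref{eq:small-data-2}, $\bT_h$ is then a contraction on $\bW_h$, and Banach's fixed-point theorem produces a unique $\bu_h\in\bW_h$ with $\bT_h(\bu_h)=\bu_h$, which is equivalent to the unique solvability of \eqref{eq:weak-h}. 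The bounds \eqref{eq:stability-h} follow directly from the discrete counterparts of \eqref{eq:stab-u} and \eqref{eq:stab-psi}, together with the discrete inf-sup control on the pressure.

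The main obstacle is the pointwise bound $\alpha\le\varphi\le\beta$ that is built into $\Phi$ but not into $\Phi_h$; the Lipschitz and monotonicity assumptions on $\kappa$ are stated only on $[\alpha,\beta]$, so strictly speaking one must invoke a globally Lipschitz, monotone extension of $\kappa$ to all of $\mathbb{R}$ (for instance, constant or linear extrapolation outside $[\alpha,\beta]$) so that the estimates apply uniformly on $\Phi_h$; alternatively, one argues via a discrete maximum principle that $\psi_h$ in fact lies between $\alpha$ and $\beta$. A secondary, but important, subtlety is that the discrete inf-sup constant for $(\bV_h,\rQ_h)$ must be mesh-independent, which is standard for Taylor--Hood on quasi-uniform, shape-regular families, but has to be flagged because that constant enters the small-data thresholds that close the contraction argument.
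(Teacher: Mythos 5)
Your proposal is correct and follows essentially the same route as the paper, which simply states that the discrete well-posedness is obtained by repeating the continuous fixed-point argument (discrete Stokes operator via Babu\v{s}ka--Brezzi with the Taylor--Hood inf-sup condition, discrete electrostatic operator via monotonicity, and a Banach contraction on the discrete ball under \eqref{eq:small-data-1}--\eqref{eq:small-data-2}). Your two flagged subtleties --- that $\Phi_h$ does not encode the pointwise bounds $\alpha\leq\varphi\leq\beta$, so one needs an extended (globally Lipschitz, monotone) $\kappa$ or a discrete maximum principle, and that the discrete inf-sup constant must be mesh-independent --- are genuine points the paper leaves implicit, and addressing them as you do makes the argument complete.
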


\begin{theorem}\label{theo:cea}
Let $(\bu,p,\psi), (\bu_h,p_h,\psi_h)$ be the continuous and discrete solutions. Then, assuming that the data satisfies
\begin{equation}\label{eq:small-data-3}
 \bigl( 1 +4\varepsilon^{-1} \bar{K}\bar{E}C_\rp^2   \bigr)4\varepsilon^{-1} C_\rS^2 C_\rp^2\|g\|_{0,\Omega} \leq 1,    
\end{equation}
we have the following bound, with hidden constant independent of $h$, 
\[ \|\bu - \bu_h\|_{1,\Omega} + \|p-p_h\|_{0,\Omega} + \|\psi - \psi_h\|_{1,\Omega} 
\lesssim 
\mathrm{dist}(\bu,\bV_h)+\mathrm{dist}(p,\rQ_h)+\mathrm{dist}(\psi,\Phi_h)
. \]
\end{theorem}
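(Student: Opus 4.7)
The strategy mirrors the continuous fixed-point analysis: derive a C\'ea-type estimate for each block (Stokes and Poisson--Boltzmann) with a ``frozen'' coefficient, and close the loop via the Lipschitz bounds \eqref{eq:lips-flow}--\eqref{eq:lips-elec}, absorbing the cross-coupling terms through \eqref{eq:small-data-3}. To this end, I would first introduce discrete solution operators $\cS^{\mathrm{flow}}_h:\Phi\to \bV_h\times \rQ_h$ and $\cS^{\mathrm{elec}}_h:\bV\to \Phi_h$ by restricting \eqref{eq:Stokes} and \eqref{eq:Boltzmann} to the finite element spaces; their well-posedness follows from the discrete inf-sup property of the Taylor--Hood pair, the coercivity in \eqref{eq:a+A-prop}, and the monotonicity argument underlying Theorem~\ref{th:main-discrete}. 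By construction, $\bu_h=\cS^{\mathrm{flow}}_{1,h}(\psi_h)$ and $\psi_h=\cS^{\mathrm{elec}}_h(\bu_h)$.

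For the flow block, I would insert the intermediate pair $(\tilde\bu_h,\tilde p_h):=\cS^{\mathrm{flow}}_h(\psi)$ and apply the triangle inequality. The contribution $\|\bu-\tilde\bu_h\|_{1,\Omega}+\|p-\tilde p_h\|_{0,\Omega}$ is controlled by the standard Babu\v{s}ka--Brezzi C\'ea estimate for the linear saddle-point problem \eqref{eq:Stokes} with fixed data $\psi$, yielding a bound by $\mathrm{dist}(\bu,\bV_h)+\mathrm{dist}(p,\rQ_h)$. The remaining piece $\|\tilde\bu_h-\bu_h\|_{1,\Omega}$ is treated by reproducing the derivation of \eqref{eq:lips-flow} at the discrete level; combined with the stability bound on $\|\bu_h\|_{1,\Omega}$ from Theorem~\ref{th:main-discrete}, this yields $\|\tilde\bu_h-\bu_h\|_{1,\Omega}\lesssim \|\psi-\psi_h\|_{1,\Omega}$.

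Symmetrically, for the potential I would set $\tilde\psi_h:=\cS^{\mathrm{elec}}_h(\bu)$ and split $\psi-\psi_h=(\psi-\tilde\psi_h)+(\tilde\psi_h-\psi_h)$. The nonlinear C\'ea-type bound $\|\psi-\tilde\psi_h\|_{1,\Omega}\lesssim \mathrm{dist}(\psi,\Phi_h)$ is derived from the strong monotonicity \eqref{eq:strong-monotonicity}: for any $\phi_h\in\Phi_h$, testing the Galerkin-orthogonal identity between the continuous and discrete equations with $\tilde\psi_h-\phi_h\in\Phi_h$, and invoking the Lipschitz bounds of $\kappa$, $c$, and $d$, yields the estimate after dividing through and taking the infimum over $\phi_h$. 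The remaining piece is handled by the discrete counterpart of \eqref{eq:lips-elec}, producing $\|\tilde\psi_h-\psi_h\|_{1,\Omega}\lesssim \|g\|_{0,\Omega}\|\bu-\bu_h\|_{1,\Omega}$ after using $\|\psi_h\|_{1,\Omega}\lesssim \|g\|_{0,\Omega}$ from \eqref{eq:stability-h}.

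Combining the two chains produces a $2\times 2$ linear system for the error norms in which each error is bounded by its own best-approximation distance plus a multiple of the other error. The main obstacle will be tracking constants so that the product of the two coupling coefficients is strictly less than one under \eqref{eq:small-data-3}, allowing the cross terms to be absorbed on the left-hand side; this plays the role at the discrete level of the contraction estimate \eqref{eq:small-data-2}. A related subtlety is the nonlinear C\'ea step, where it is essential to freeze the continuous velocity $\bu$ (rather than $\bu_h$) in $\tilde\psi_h$, so that the Galerkin-orthogonality between the continuous and discrete Poisson--Boltzmann equations is available cleanly even though $\bu\notin\bV_h$.
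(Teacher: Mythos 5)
Your overall skeleton (freeze one field, use a C\'ea bound per block plus discrete Lipschitz estimates, close a $2\times 2$ system by absorption) is a workable variant of what the paper does; the paper instead fixes an arbitrary discrete triple $(\hat\bv_h,\hat q_h,\hat\varphi_h)$, splits each error into an approximation part $\xi$ and a discrete part $\chi$, and derives a single coupled estimate ``as in Theorem~\ref{th:main-continuous}'', absorbing the term multiplying $\|\bu-\bu_h\|_{1,\Omega}$ by means of the stability bounds \eqref{eq:stability}, \eqref{eq:stability-h} and the hypothesis \eqref{eq:small-data-3}, before taking the infimum over the discrete triple. Your auxiliary operators $\cS^{\mathrm{flow}}_h(\psi)$ and $\cS^{\mathrm{elec}}_h(\bu)$ are legitimate (well-posedness follows since $\psi\in\rZ$ and $\bu\in\bW$ under the standing hypotheses), the saddle-point C\'ea step and the monotonicity-based C\'ea step for the potential both go through, and the pressure piece $\|\tilde p_h-p_h\|_{0,\Omega}$ can be recovered from the discrete inf-sup condition.

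The genuine gap is in your last step: the claim that the product of the two coupling coefficients is below one \emph{under \eqref{eq:small-data-3}}. With your decomposition, the discrete analogue of \eqref{eq:lips-flow} together with $\|\bu_h\|_{1,\Omega}\leq \varepsilon[2C_\rp^2C_\rS^2]^{-1}$ gives $L_1=\mu^{-1}\bar E(\varepsilon+2C_\rp^2\bar K)$, and the discrete analogue of \eqref{eq:lips-elec} together with $\|\psi_h\|_{1,\Omega}\leq 2C_\rp^2\varepsilon^{-1}\|g\|_{0,\Omega}$ gives $L_2=4C_\rS^2C_\rp^4\varepsilon^{-2}\|g\|_{0,\Omega}$; hence $L_1L_2$ is exactly the left-hand side of \eqref{eq:small-data-2}, not of \eqref{eq:small-data-3}. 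Condition \eqref{eq:small-data-3} contains no $\mu$ and couples $\bar E$ only to $\bar K$, so it does not control $L_1L_2$: for instance with $\bar K$ negligible and $\bar E C_\rp^2\gg\mu$ the product exceeds one while \eqref{eq:small-data-3} still holds, and your absorption step fails as stated. Two repairs are possible: (i) observe that the theorem presupposes the existence and uniqueness of both solutions, hence the hypotheses of Theorems~\ref{th:main-continuous}--\ref{th:main-discrete}, so \eqref{eq:small-data-2} is available and closes your system with an $h$-independent (though data-dependent) constant; or (ii) reorganize as the paper does, estimating the coupled error equations directly so that the coefficient to be absorbed is $2\varepsilon^{-1}C_\rS^2C_\rp^2\bigl(1+4\varepsilon^{-1}\bar K\bar E C_\rp^2\bigr)\|g\|_{0,\Omega}\leq 1/2$, which is precisely what \eqref{eq:small-data-3} delivers.
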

\begin{proof}
 For a given $(\hat{\bv}_h,\hat{q}_h,\hat{\varphi}_h) \in \bV_{h}\times Q_{h}\times\Phi_{h}$, let us  decompose errors 
 using 
 $\bxi_{\bu} := \bu - \hat{\bv}_h$,  
 $\bchi_{\bu} := \hat{\bv}_h - \bu_h$, 
$\xi_{p} := p - \hat{q}_h$, 
$\chi_{p} := \hat{q}_h - p_h$, $\xi_{\psi} := \psi - \hat{\varphi}_h$ and   
$\chi_{\psi} := \hat{\varphi}_h - \psi_h$. 
Proceeding as in Theorem \ref{th:main-continuous}, we can obtain
\begin{align*}
\|\bchi_{\bu}\|+ \|\chi_{p}\| + \|\chi_{\psi}\| & \leq \frac{4}{\varepsilon}\bar{K}\bar{E}C_\rS^2 C_\rp^2\|\psi_h\|_{1,\Omega} \|\bu -\bu_h\|_{1,\Omega} + \varepsilon\|\xi_\psi\|_{1,\Omega} 
\\
& \ + C_\rS^2\|\psi\|_{1,\Omega} \|\bu -\bu_h\|_{1,\Omega} + (1+\mu)\|\bxi_{\bu}\|_{1,\Omega} + \|\xi_{p}\|_{0,\Omega} 
\\
& \ + C_\rS^2\|\bu_h\|_{1,\Omega}\|\xi_{\psi}\|_{1,\Omega} + C_\rS^2\bar{E}\|\psi_h\|_{1,\Omega}\|\xi_{\bu}\|_{1,\Omega}.
\end{align*}
Then, from 
\eqref{eq:stability} and  \eqref{eq:stability-h}, assumption \eqref{eq:small-data-3}, and the fact that $(\hat{\bv}_h,\hat{q}_h,\hat{\varphi}_h) \in \bV_{h}\times Q_{h}\times\Phi_{h}$ is arbitrary, the desired result is obtained.
\end{proof}
{Finally,} the following result is a direct consequence of Theorem~\ref{theo:cea} and standard interpolation properties for Taylor--Hood FEs \cite{ern04}. 
\begin{theorem}\label{theo:conv}
Let $(\bu,p,\psi), (\bu_h,p_h,\psi_h)$ be the continuous and discrete solutions assuming  $\bu \in \bV\cap \bH^{s+1}(\Omega)$, $p\in H^s(\Omega)$, and $\psi \in \Phi \cap H^{s+1}(\Omega)$, for some $s\in (1/2,k+1]$. Then, there exists $C>0$, independent of $h$, such that 
\[ \|\bu - \bu_h\|_{1,\Omega} + \|p-p_h\|_{0,\Omega} + \|\psi - \psi_h\|_{1,\Omega} \leq C\,h^s \{ \|\bu\|_{1+s,\Omega} +  |p|_{s,\Omega} +  \|\psi\|_{1+s,\Omega} \}. \]
\end{theorem}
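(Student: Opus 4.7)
The plan is essentially mechanical: invoke Theorem~\ref{theo:cea} and then bound each of the three best-approximation distances by the error of a concrete interpolant in the Taylor--Hood family.

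First, I would apply Theorem~\ref{theo:cea}, whose small-data hypothesis \eqref{eq:small-data-3} is inherited from the well-posedness hypotheses (i.e.\ Theorems~\ref{th:main-continuous}--\ref{th:main-discrete}). This yields
\[
\|\bu - \bu_h\|_{1,\Omega} + \|p-p_h\|_{0,\Omega} + \|\psi - \psi_h\|_{1,\Omega}
\lesssim \mathrm{dist}(\bu,\bV_h)+\mathrm{dist}(p,\rQ_h)+\mathrm{dist}(\psi,\Phi_h),
\]
with hidden constant independent of $h$.

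Next, I would exhibit specific elements in each discrete space. For the velocity and the potential I would use the Scott--Zhang (or, since $s>1/2$ together with $n\leq 3$ gives $H^{s+1}(\Omega)\hookrightarrow C^0(\overline{\Omega})$, simply the Lagrange) interpolants $I_h^{\bu}\bu\in\bV_h$ and $I_h^{\psi}\psi\in\Phi_h$, both of which preserve the homogeneous Dirichlet trace on $\Gamma_D$ by construction. For the pressure I would use the standard Lagrange interpolant $\Pi_h^p p\in\rQ_h$. Each of these operators satisfies the classical componentwise approximation estimates for continuous $\mathbb{P}_{k+1}$ (respectively $\mathbb{P}_k$) Lagrange finite elements on shape-regular simplicial meshes \cite{ern04}, namely
\begin{align*}
\|\bu - I_h^{\bu}\bu\|_{1,\Omega} &\lesssim h^s \|\bu\|_{1+s,\Omega},\\
\|p - \Pi_h^p p\|_{0,\Omega} &\lesssim h^s |p|_{s,\Omega},\\
\|\psi - I_h^{\psi}\psi\|_{1,\Omega} &\lesssim h^s \|\psi\|_{1+s,\Omega},
\end{align*}
for every $s\in(1/2,k+1]$. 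Plugging these into the Céa estimate and absorbing the constants into $C$ gives the stated bound.

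Since the result is advertised as a direct corollary, I do not anticipate any substantive obstacle beyond ensuring that the chosen interpolants are conforming in the respective discrete subspaces. The only point that merits an extra line is that $I_h^{\psi}\psi$ need not satisfy the pointwise bounds $\alpha\leq\cdot\leq\beta$ built into $\Phi$ at the continuous level, but this is harmless because $\Phi_h$ is defined without such a constraint and because Theorem~\ref{theo:cea} compares $\psi$ against arbitrary elements of $\Phi_h$ (not of a constrained subset). The restriction $s>1/2$ is exactly what is needed in three dimensions for the interpolants to be well-defined on $H^{s+1}$; in two dimensions it is automatic.
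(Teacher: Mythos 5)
Your proposal is correct and matches the paper's argument, which simply invokes Theorem~\ref{theo:cea} together with standard interpolation estimates for Taylor--Hood elements from \cite{ern04}; your extra remarks on trace preservation and the unconstrained nature of $\Phi_h$ are sound but not needed beyond what the paper states.
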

\section{Numerical results}\label{sec:results}

\begin{figure}[t!]
\begin{center}
\includegraphics[width=0.325\textwidth]{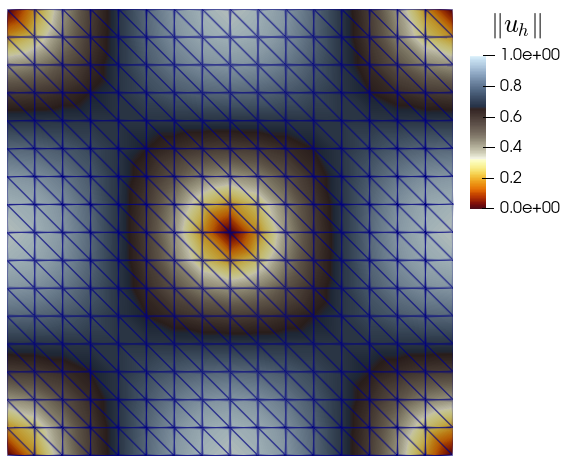}
\includegraphics[width=0.325\textwidth]{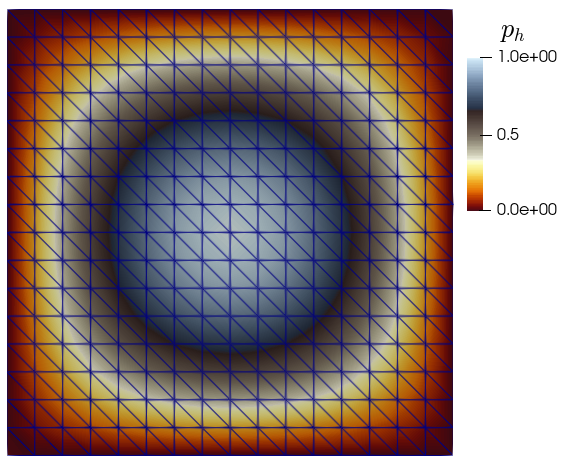}
\includegraphics[width=0.325\textwidth]{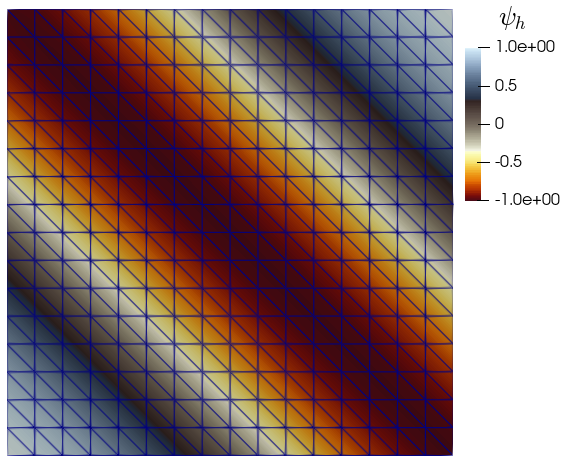}
\end{center}

\vspace{-0.4cm}
\caption{Approximate solutions 
for the convergence test on the unit square domain, using the lowest-order Taylor--Hood elements.}\label{fig1}
\end{figure}

We now provide {three} simple computational tests confirming the convergence of the method and  simulating electrically charged fluid in a container and a channel. They have been carried out with the  library \texttt{Gridap} \cite{badia22}. We use a  Newton--Raphson method with a residual tolerance of $10^{-7}$. 
In the first test, the convergence rates from Theorem~\ref{theo:conv} are studied using the  unit square domain $\Omega = (0,1)^2$ and  the following manufactured solutions to \eqref{eq:strong}
\begin{align*}
\bu
:= \begin{pmatrix}
\cos(\pi x)\sin(\pi y)\\
-\sin(\pi x)\cos(\pi y)\end{pmatrix}, \quad   
p
:=\sin(\pi x)\sin(\pi y), \quad \psi 
:= \cos(\pi(x+y))\end{align*}
The Dirichlet boundary corresponds to the bottom and right segments and the Neumann boundary is the remainder. The constants assume unity values $k_0 = k_1 = \mu = \varepsilon = 1$, $\bE = (0,-1)^{\tt t}$, and the  forcing term and non-homogeneous boundary data are computed from the manufactured solutions. The error decay is reported in Table~\ref{table1}, where we also tabulate rates of convergence  $\texttt{r}(\cdot)  =\log(\texttt{e}_{(\cdot)}/\tilde{\texttt{e}}_{(\cdot)})[\log(h/\tilde{h})]^{-1}$, where $\texttt{e},\tilde{\texttt{e}}$ denote errors generated on two consecutive  meshes of sizes $h$ and~$\tilde{h}$, respectively. Optimal convergence is verified in all fields, and the approximate solutions plotted in Figure~\ref{fig1} show well-resolved profiles even for coarse meshes. 

\begin{table}[t!]
		\setlength{\tabcolsep}{3.5pt}
		\centering 
		{\footnotesize\begin{tabular}{|rcccccccc|}
\hline
				DoF   &   $h$  & $\texttt{e}(\bu)$  &   $\texttt{r}(\bu)$   &   $\texttt{e}(p)$  &   $\texttt{r}(p)$  &  $\texttt{e}(\psi)$  &   $\texttt{r}(\psi) $ &   it \\
				\hline 
				\multicolumn{9}{|c|}{$k=1$} \\
				\hline
				 57 & 0.7071 & 6.50e-01 & $\star$ & 2.21e-01 & $\star$ & 2.65e-01 & $\star$ &  4 \\
    217 & 0.3536 & 1.79e-01 & 1.860 & 3.49e-02 & 2.658 & 6.96e-02 & 1.927 &  4 \\
    849 & 0.1768 & 4.66e-02 & 1.943 & 6.97e-03 & 2.325 & 1.78e-02 & 1.969 &  4 \\
   3361 & 0.0884 & 1.18e-02 & 1.979 & 1.64e-03 & 2.089 & 4.48e-03 & 1.987 &  4 \\
  13377 & 0.0442 & 2.97e-03 & 1.992 & 4.04e-04 & 2.022 & 1.12e-03 & 1.995 &  4 \\
  53377 & 0.0221 & 7.45e-04 & 1.997 & 1.01e-04 & 2.006 & 2.82e-04 & 1.998 &  4 \\
 213249 & 0.0110 & 1.86e-04 & 1.998 & 2.51e-05 & 2.001 & 7.05e-05 & 1.999 &  4 \\
 \hline
				\multicolumn{9}{|c|}{$k=2$}\\
				\hline
133 & 0.7071 & 1.38e-01 & $\star$ & 3.01e-02 & $\star$ & 3.39e-02 & $\star$ &   4 \\
    513 & 0.3536 & 1.86e-02 & 2.899 & 4.12e-03 & 2.871 & 4.37e-03 & 2.955 &   4 \\
   2017 & 0.1768 & 2.35e-03 & 2.980 & 5.41e-04 & 2.926 & 5.51e-04 & 2.988 &   4 \\
   8001 & 0.0884 & 2.95e-04 & 2.995 & 6.87e-05 & 2.979 & 6.92e-05 & 2.995 &   4 \\
  31873 & 0.0442 & 3.69e-05 & 2.999 & 8.62e-06 & 2.995 & 8.66e-06 & 2.998 &   4 \\
 127233 & 0.0221 & 4.61e-06 & 2.999 & 1.08e-06 & 2.997 & 1.09e-06 & 2.994 &   4 \\
 508417 & 0.0110 & 5.80e-07 & 2.992 & 1.51e-07 & 2.939 & 1.63e-07 & 2.939 &   4 \\
 				\hline
		\end{tabular}}
		\caption{Convergence history. Errors, experimental rates, and Newton iteration count for FE families  using polynomial degrees $k=1,2$. }
		\label{table1}
	\end{table}

Next we study the electro-osmotic and pressure-driven mixing of a fluid in a micro-annulus following the configuration of  \cite{akyildiz19}, but instead of bipolar coordinates we employ a full 3D Cartesian system.  The model  parameters are set to $\mu = 10^{-2}$, $r_1 = 1$, $r_2 = 2$, $H = 0.75$, and $k_0 = \frac{(H-r_1-r_2)(H-r_1+r_2)(H+r_1-r_2)(H+r_1+r_2)}{2H}$, $k_1=1$, $\bE = (0,0,1)^{\tt t}$. On the outer annulus we impose $\psi = 2$, on the inner one we set $\psi=1$ and leave zero-flux boundary conditions elsewhere. We set $\bu = (0,0,1)^{\tt t}$ on the bottom face, no slip velocity on the inner and outer annulus $\bu = \cero$, and outflow conditions on the top face. For this test we discard advection (so that only the charge and inlet velocity determine the flow patterns). The results in Figure~\ref{fig2} show a higher electro-osmotic fluid mobility near the narrow part of the channel, and a distribution of the double layer potential comparable to the profiles obtained in \cite{akyildiz19}.

\begin{figure}[t!]
\begin{center}
\includegraphics[width=0.325\textwidth]{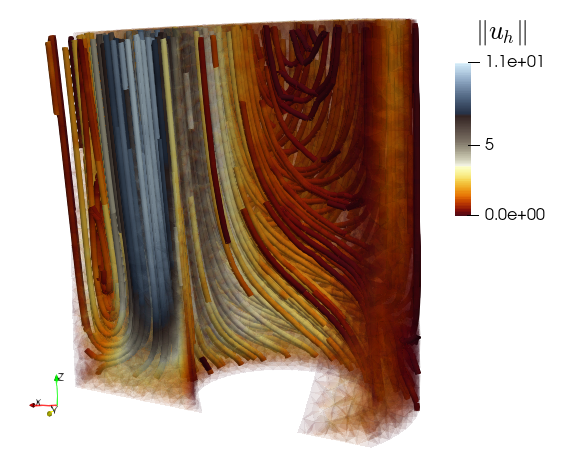}
\includegraphics[width=0.325\textwidth]{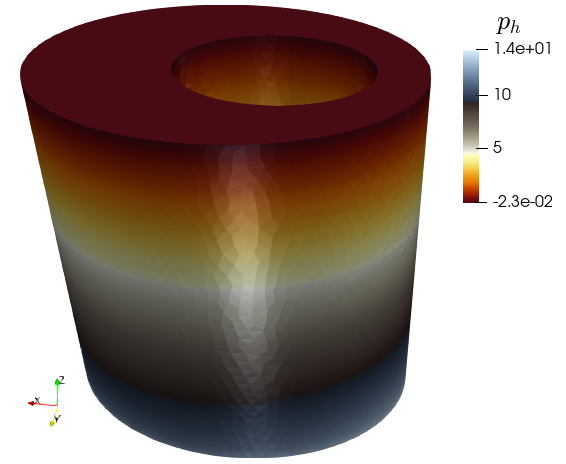}
\includegraphics[width=0.325\textwidth]{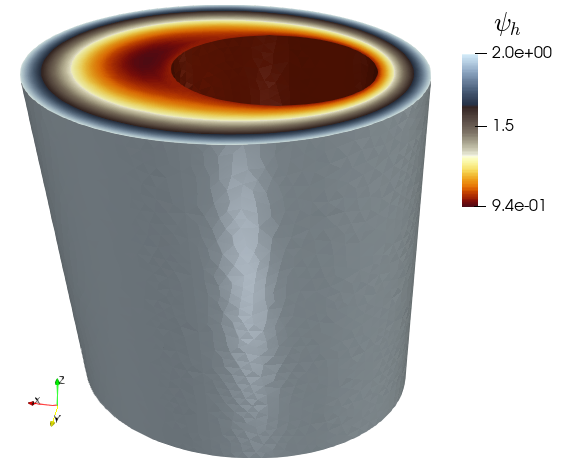}
\end{center}

\vspace{-0.4cm}
\caption{Approximate velocity magnitude and streamlines, pressure profile, and potential for the flow-potential test using Taylor--Hood elements. }\label{fig2}
\end{figure}

{Finally, we present a simulation of electrically charged flow in a nanopore sensor with obstacles. The geometry of size 12\,nm$\times$16\,nm and flow configuration are adapted from \cite[Section 4.5]{mitscha2017adaptive} (although that work focuses on Poisson--Nernst--Planck/Stokes equations). An ionic current is generated by a difference of potential on the top and bottom of the nanopore $\psi_{\max}=2$, $\psi_{\min}=0$. Also, a parabolic inflow velocity $\bu_{\mathrm{in}}= (0,-0.1\tanh(40(6-x)^2))^{\tt t}$ is imposed on the top boundary and outflow boundary conditions are considered on the bottom. On the outer left and right segments, we impose a slip condition $(\bu\cdot\bn = 0)$ and on the remainder of the boundary we impose no-slip conditions for the velocity $\bu = \cero$ and zero-flux for the potential. For this example, we incorporate again the advective term in the potential equation as well as the convective nonlinearity in the momentum balance, and consider the following parameter values (all in the nm scale)
\[ \mu = 0.1\,\mathrm{Pa}\cdot\mathrm{s}, \quad \varepsilon = 0.075, \quad \bE = (0.1,-0.1)^{\tt t}, \quad k_0 = 10^{-3}.\]
The aim of the example is simply to simulate the electric patterns and corresponding flow associated with an applied field pointing not straight down, but with a slight angle to break symmetry. In Figure~\ref{fig:ex3} we show the velocity line integral contours, the pressure drop and the potential distribution. Thanks to a comparable drag force used herein, we can see a similar recirculation as in the aforementioned reference. For this test, the total number of degrees of freedom is 157676, and the Newton--Raphson solver takes seven iterations to reach the prescribed tolerance.}

\begin{figure}[t!]
    \centering
    \includegraphics[width=0.325\linewidth]{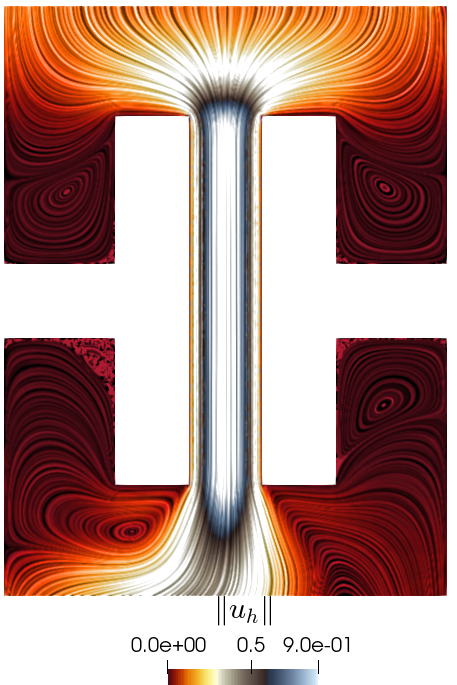}
    \includegraphics[width=0.325\linewidth]{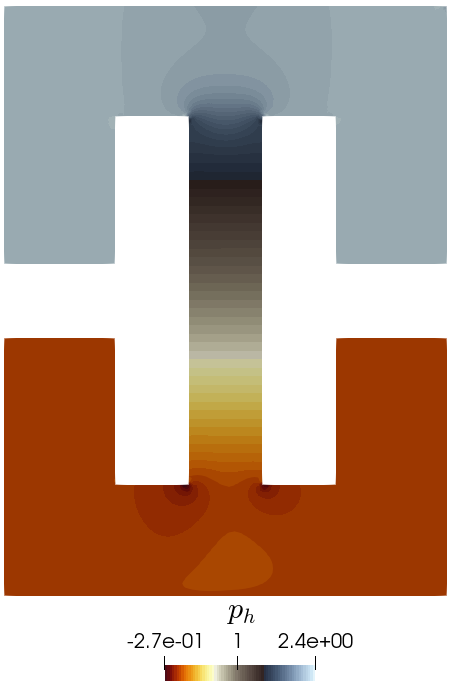}
    \includegraphics[width=0.325\linewidth]{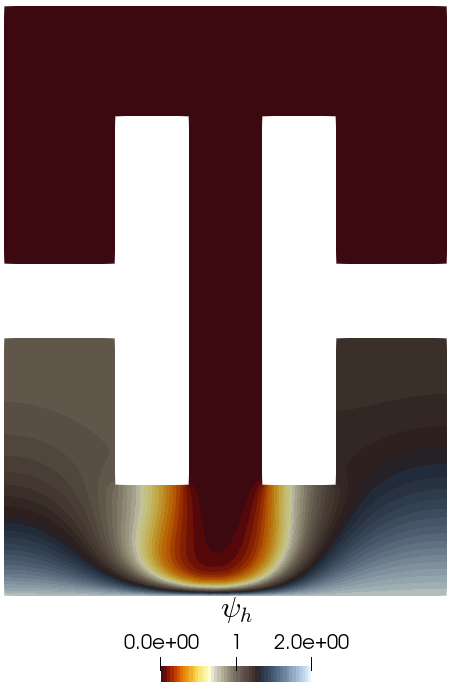}
    \caption{{Flow patterns of electrically charged fluid in a nanosensor. Approximate line integral contour of velocity, pressure, and potential.}}
    \label{fig:ex3}
\end{figure}

\small
\paragraph{Acknowledgements.} This research has been supported by the Australian Research Council through the {Future Fellowship}   FT220100496 and {Discovery Project}  DP22010316; and by the National Research and Development Agency (ANID) of  Chile through the postdoctoral grant {Becas Chile}  74220026.

\bibliographystyle{abbrvnat}
\bibliography{references}

\end{document}